\theoremstyle{plain}
\newtheorem{theorem}{Theorem}[section]
\newtheorem{corollary}[theorem]{Corollary}
\newtheorem{proposition}[theorem]{Proposition}
\theoremstyle{definition}
\newtheorem{definition}[theorem]{Definition}
\theoremstyle{definition}
\newtheorem{remark}[theorem]{Remark}
\newtheorem{example}[theorem]{Example}  
\newtheorem{examples}[theorem]{Examples} 
\newcommand{\RR}{\mathbb{R}}
\newcommand{\EE}{\mathbb{E}}
\newcommand{\cB}{\mathcal{B}}
\newcommand{\cS}{\mathcal{S}}
\newcommand{\cA}{\mathcal{A}}
\newcommand{\cL}{\mathcal{L}}
\newcommand{\dd}{\mathrm{d}}
\newcommand{\ee}{\mathrm{e}}
\newcommand{\sign}{\mathrm{sgn}}
\newcommand{\abs}[1]{\left| #1 \right|}
\newcommand{\norm}[1]{\left\| #1 \right\|}
\newcommand{\bbr}{\mathbb{R}}
\newcommand{\bbp}{\mathbb{P}}
\newcommand{\bbe}{\mathbb{E}}
\newcommand{\alex}{\textcolor{black}}
\begin{document}
\title{Laplace Symbols and Invariant Distributions} 
\author{Anita Behme\thanks{Technische Universit\"at
      Dresden, Institut f\"ur Mathematische Stochastik, Zellescher Weg 12-14, 01069 Dresden, Germany, \texttt{anita.behme@tu-dresden.de}, phone: +49-351-463-32426, fax:  +49-351-463-37251.}$\;$
 and Alexander
  Schnurr\thanks{Universit\"at Siegen, Department Mathematik, Walter-Flex-Str. 3, D-57072 Siegen, Germany,
              \texttt{schnurr@mathematik.uni-siegen.de}, 
              phone: +49-271-740-3806, fax: +49-271-740-3627.
              }}
\date{\today}
\maketitle

\vspace{-1cm}
\begin{abstract}
We introduce a new kind of symbol in the framework of It\^o processes which are bounded on one side. The connection between this symbol and the infinitesimal generator is analyzed. Based on this concept, an integral criterion for invariant distributions of the underlying process is derived. Some applications are mentioned. 
\end{abstract}


{\sl Key words:} Feller process, Invariant measure, It\^o process, L\'evy-type process, Stationarity, Stochastic differential equation, Symbol, Laplace transform

\vspace*{-7mm}
\section{Introduction}\label{S0}
\setcounter{equation}{0}

Over the last two decades, the so-called symbol of a stochastic process has proven to be a useful tool in order to derive local and global properties of the corresponding stochastic process (cf. \cite{walterhabil}, \cite{schilling98hdd}, \cite{alexgeneralsymbol} and Chapter 5 of \cite{ReneLM13}). Following ideas of Jacob \cite{Jaco1998} and Schilling \cite{schilling98pos}, Schnurr has generalized this concept to homogeneous diffusions with jumps in the sense of Jacod and Shiryaev \cite{jacodshir}. In its most general form the symbol of an $\RR^d$-valued homogeneous diffusion with jumps $(X_t)_{t\geq 0}$ looks as follows. For $x,\xi\in\bbr^d$
\begin{align} \label{eq:symbol}
     p(x,\xi):=- \lim_{t\downarrow 0}\frac{\bbe^x \ee^{i(X^\sigma_t-x)'\xi}-1}{t},
\end{align}
where $X^\sigma$ is the process $X$ stopped at time $\sigma$, the first exit time of a compact neighborhood of $x$, and $x'$ denotes the transpose of the vector $x$. 
Under mild regularity conditions the limit in \eqref{eq:symbol} is always a continuous negative definite function in the sense of Schoenberg (cf. \cite{bergforst}), which means that for each fixed $x$ we obtain a L\'evy-Khintchine exponent
\begin{align} \label{lkf}
\phi(\xi):=-i\ell' \xi + \frac12 \xi' Q \xi - \int_{\RR^d} \left(\ee^{i\xi'y} -1 - i\xi' y \mathds{1}_{\{\|y\|<1\}}(y)\right) N(\dd y),\, \xi\in\RR^d.
\end{align}

In \cite{behmeschnurr1} we derived an integral criterion for invariant measures of It\^o processes based on the symbol. In that article, we had to use the above formula without the stopping time as obviously global properties might be destroyed by stopping, in contrast to local path properties. This resulted in the fact that for the method proposed in \cite{behmeschnurr1} the symbol had to satisfy a certain growth condition corresponding to bounded semimartingale characteristics of the treated processes and hence to bounded coefficients of the SDEs in the background. 
Since this is a serious restriction, in the present paper we aim to show an alternative to the classical symbol, which then can be used in cases without bounded characteristics/coefficients as long as the corresponding processes have a lower or upper bound (e.g. are restricted to be positive in each component). Hereby, we take up the classic idea of using Laplace transforms instead of Fourier transforms/characteristic functions and define a so called Laplace symbol. In the present paper we emphasize the applicability of the new concept by considering stationary distributions. However, we believe that it can be beneficial in other areas as well.

Thus, after setting the stage in Section~\ref{S1}, we will define the Laplace symbol in Section~\ref{S2} which also includes several results on its computation as well as on its relation to the generator of the stochastic process. Section~\ref{S3} is devoted to  the derivation of an integral criterion for invariant measures of the underlying processes which relies on the Laplace symbol. Some examples are added to exhibit the usability of the derived criterion. 


\section{Preliminaries}\label{S1}
\setcounter{equation}{0}


In this paper we will focus on the class of It\^o processes as defined below. 

\begin{definition} \label{def:ito}
An \emph{It\^o process} $(X_t)_{t\geq 0}$ is a strong Markov process, which is a semimartingale with respect to every $\bbp^x$ having semimartingale characteristics of the form 
\begin{align} \begin{split} \label{chars}
  B_t^{(j)}(\omega) &=\int_0^t  \ell^{(j)}(X_s(\omega)) \ \dd s,  \hspace{10mm} j=1,...,d,\\
  C_t^{jk}(\omega)  &=\int_0^t Q^{jk}(X_s(\omega)) \ \dd s,       \hspace{10mm} j,k=1,...,d,\\
  \nu(\omega;\dd s,\dd y) &=N(X_s(\omega),\dd y) \ \dd s, 
\end{split} \end{align}
for every $x\in\bbr^d$ with respect to a fixed cut-off function $\chi$. Here $\ell(x)=(\ell^{(1)}(x),...,\ell^{(d)}(x))'$ is a vector in $\bbr^d$, $Q(x)$ is a positive semi-definite matrix and $N$ is a Borel transition kernel such that $N(x,\{0\})=0$. We call $\ell$, $Q$ and $n:=\int_{y\neq 0} (1\wedge \norm{y}^2) \ N(\cdot,\dd y)$ the \emph{differential characteristics} of the process.
\end{definition}

It\^o processes have been characterized in \cite{cinlarjacod81} as the set of solutions of very general stochastic differential equations (SDEs) of Skorokhod-type. In particular this class includes L\'evy processes, solutions of L\'evy driven SDEs and Feller processes with sufficiently rich domain. 
Note that It\^o processes in the sense of Definition \ref{def:ito} are sometimes in the literature called {\em L\'evy type processes}  (cf. \cite{ReneLM13}).

Continuity of the differential characteristics of the treated It\^o processes is always sufficient for our purposes. However, we use the concept of fine continuity in order to derive even more general results. Loosely speaking, the advantage of fine continuity is that one has to care about continuity only as far as the process can detect it.  
We will use fine continuity in a way that is governed by the subsequent result which was established in \cite[Thm. II.4.8]{blumenthalget}.

\begin{proposition}
Let $X$ be a Markov process and $f:\bbr^d\to\bbr$ be a Borel-measurable function. Then $f$ is finely continuous if and only if the function
$
t\mapsto f(X_t)=f\circ X_t
$
is right continuous at zero $\bbp^x$-a.s. for every $x\in\bbr^d$.
\end{proposition}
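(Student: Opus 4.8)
\emph{Proof proposal.} The plan is to prove both implications directly from the fine-topology definition, translating \emph{fine continuity of the real-valued Borel function $f$} into the behaviour of the path $t \mapsto f(X_t)$ near time zero. Throughout I use that $X$ is normal, so $X_0 = x$ holds $\bbp^x$-a.s., and hence right continuity at zero of $f \circ X$ means exactly $f(X_t) \to f(x)$ as $t \downarrow 0$.

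For the ``only if'' direction, fix $x \in \bbr^d$. For each $\veps > 0$ the set $A_\veps := f^{-1}\big((f(x)-\veps, f(x)+\veps)\big)$ is the preimage of an open interval under the finely continuous function $f$, hence finely open, and it contains $x$. By Definition~\ref{def:finelycont}(a) there is a nearly Borel set $D \supseteq A_\veps^c$ with $\bbp^x(T_D > 0) = 1$; since $A_\veps^c \subseteq D$, avoidance of $D$ forces the path to stay in $A_\veps$, so $\bbp^x$-a.s.\ the path remains in $A_\veps$ for a positive amount of time. Applying this along $\veps = 1/n$ and intersecting the countably many almost-sure events yields a single $\bbp^x$-null set off which, for every $n$, one has $|f(X_t) - f(x)| < 1/n$ for all sufficiently small $t > 0$; this is precisely the asserted right continuity at zero.

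For the ``if'' direction I show that $f^{-1}(G)$ is finely open for each open $G \subseteq \bbr$. Fix $x$ with $f(x) \in G$ and choose $\veps > 0$ with $(f(x)-\veps, f(x)+\veps) \subseteq G$. Since $f$ is Borel and $G^c$ is closed, the set $D := f^{-1}(G^c) = \big(f^{-1}(G)\big)^c$ is Borel and therefore nearly Borel. Right continuity of $f \circ X$ at zero under $\bbp^x$ gives, for $\bbp^x$-a.e.\ $\omega$, some $\delta(\omega) > 0$ such that $|f(X_t) - f(x)| < \veps$, and hence $X_t \in f^{-1}(G)$, for all $t \in [0, \delta(\omega))$; thus $T_D \ge \delta > 0$ and $\bbp^x(T_D > 0) = 1$. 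As this holds at every such $x$, the set $f^{-1}(G)$ satisfies Definition~\ref{def:finelycont}(a), so $f$ is finely continuous.

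The one genuinely delicate point --- and the reason the definition of finely open is phrased through an auxiliary set $D$ rather than through $A^c$ itself --- is the measurability bookkeeping: the complement of an arbitrary finely open set need not be nearly Borel, so the relevant hitting time need not even be defined. In both arguments above this difficulty evaporates because every set in play is the preimage under the Borel map $f$ of a genuine open or closed subset of $\bbr$, so its complement is already Borel; one may then take $D$ to be that complement directly, and the comparison of hitting times reduces to the trivial inclusion $A^c \subseteq D$. For this reason the statement is in essence \cite[Thm.~II.4.8]{blumenthalget} specialized to real-valued Borel $f$, and the write-up can be kept short by invoking the general fine-topology characterization and reading off this special case.
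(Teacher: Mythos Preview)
The paper does not give its own proof of this proposition: it is stated without proof and attributed to \cite[Thm.~II.4.8]{blumenthalget}. Your proposal therefore goes beyond what the paper does by supplying a direct argument from Definition~\ref{def:finelycont}, and that argument is correct---it is essentially the standard proof behind the Blumenthal--Getoor result, specialized (as you observe) to the case where every relevant complement is already Borel and may be taken as the auxiliary set $D$. Since there is no proof in the paper to compare against, the only useful remark is that your final sentence already identifies the shortest acceptable write-up: cite \cite[Thm.~II.4.8]{blumenthalget} and move on, which is exactly what the authors do.
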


This proposition yields another advantage of fine continuity: Consider a process $(X_t)_{t\geq 0}$ which is defined on any measurable subset $B$ of $\bbr^d$ or $\bbr_+^d$. If a function is finely continuous on the state space $B$, then we can always extend the process and the function to $\bbr^d$ or $\bbr_+^d$ by setting $X_t=x$ for $x\notin B$ and $t\geq 0$. The resulting function is again finely continuous.


\section{The Laplace symbol}\label{S2}
\setcounter{equation}{0}


\subsection{Definition of the Laplace symbol}


The Laplace symbol can be seen as a state-space dependent right hand side derivative at zero of the Laplace transform of a stochastic process. Since the Laplace transform characterizes the distribution of a random variable, the Laplace symbol in a certain way reflects the infinitesimal changes in the distribution over time. 

\begin{definition} \label{def:laplacesymbol}
Let $(X_t)_{t\geq 0}$ be a Markov process in $\RR^d_+$. Define for every $x,\xi\in\bbr_+^d$ 
\begin{align} \label{symbollimit}
\lambda(x,\xi):=-\lim_{t\downarrow 0} h_\xi(x,t):=-\lim_{t\downarrow 0}\frac{\bbe^x \ee^{-(X_t-x)'\xi}-1}{t}.
\end{align}
Then we call $\lambda:\bbr^d_+\times \bbr^d_+\to \RR$ the \emph{Laplace symbol} of $X$ with domain $D_\lambda \subseteq \bbr^d_+\times \bbr^d_+$ whenever the limit in \eqref{symbollimit} exists for every $x,\xi\in D_\lambda$.
\end{definition}

\begin{remark}
 Obviously the Laplace symbol as in Definition \ref{def:laplacesymbol} can also be defined for any Markov process which is componentwise bounded from below. Throughout this article we use the bound $0$ to ease notation. Similarly, for Markov processes bounded from above, one may define a one-sided symbol $\lambda_-:\bbr^d_-\times \bbr^d_-\to \RR$.
\end{remark}


A class of processes for which the Laplace symbol is obviously relevant are It\^o processes conditioned to stay positive. As examples we consider subordinators and a Brownian motion absorbed in zero.

\begin{examples} \begin{enumerate}
		\item A subordinator is a L\'evy process in $\RR_+$ with only positive increments and it is well known (cf. \cite{sato}) that the Laplace transform of such a process $(X_t)_{t\geq 0}$ can be written as
		$
		\bbe^0\left[\ee^{-\xi X_t} \right]=\ee^{- t \lambda(\xi)}
		$
		where
		\[
		\lambda(\xi)=\ell\xi-\int_{(0,\infty)} \left(\ee^{-\xi y}-1\right) \ N(\dd y),
		\]
		with $\ell \geq 0$ and a L\'evy measure $N$ on $\RR_+$ such that $\int_{(0,\infty)} (1 \wedge x ) N(\dd x)<\infty$. Using the fact that subordinators are homogeneous in space we obtain
		\begin{align*}
		-\frac{\bbe^x \left[ \ee^{-(X_t-x)\xi}\right] -1}{t}&= -\frac{\bbe^0 \left[ \ee^{-X_t\xi}\right] -1}{t} = -\frac{  \ee^{-t \lambda(\xi)} -1}{t}\overset{t\downarrow 0}\longrightarrow \lambda(\xi).
		\end{align*}
		In this case the Laplace symbol is constant in the first variable and coincides with the Laplace exponent.
		\item  Let $(B_t)_{t\geq 0}$ be a standard Brownian motion in $\RR$ and define $(X_t)_{t\geq 0}$ via $X_t^x=x+B_{t\wedge \tau}$, where $\tau=\inf\{t\geq 0, B_t=-x\}$. Then it follows by standard computations that
		$$\lambda(x,\xi)=- \frac{d}{dt} \bbe^x [\ee^{-(X_t-x)\xi}]|_{t=0} =- \frac{d}{dt} \EE^0 [\ee^{- B_{t\wedge \tau}\xi}]|_{t=0} = -\frac{\xi^2}{2},  \quad x> 0, \xi \in \RR_+,$$
		while $$\lambda(0,\xi)=- \frac{d}{dt} \bbe^0 [\ee^{-B_\tau \xi}]|_{t=0}= 0,  \quad \xi \in \RR_+.$$
	\end{enumerate}
\end{examples}

\begin{remark}
Obviously, by comparing the symbol and the Laplace symbol defined above, one recognizes that at least formally 
$ \text{`}\lambda(x, \xi)=p(x, i\xi)\text{'}$.
Nevertheless, we chose not to define the Laplace symbol via an analytic extension of the symbol, since for a characteristic function to be analytic it is necessary that the corresponding distribution has moments of all orders (see \cite[p. 198]{lukacs}). As we are interested in an extension of the results in \cite{behmeschnurr1} to the case of unbounded characteristics/moments, this restriction obviously would have been too strong.
\end{remark}

\vspace*{-7mm}

\subsection{Computing the Laplace symbol}

In order to establish the following main result of this section, we need a boundedness assumption which is very weak. In fact it is sufficient that for each differential characteristic $d$ the function $d(\cdot) \exp(-\cdot)$ is bounded. Hence, polynomially bounded is sufficient in order to establish the following result and for all our applications. 

\begin{theorem} \label{glmkonv}  
Let $(X_t)_{t\geq 0}$ be an $\RR_+^d$-valued It\^o process with polynomially bounded, finely continuous differential characteristics.
For every $\xi\in\bbr^d$ the Laplace symbol 
\begin{align*} 
    \lambda(x,\xi)=-\lim_{t\downarrow 0} h_\xi(x,t)=-\lim_{t\downarrow 0}\frac{\bbe^x \ee^{-(X_t-x)'\xi}-1}{t}
\end{align*}
exists and the functions $h_\xi(x,\cdot)$ are globally bounded in $t$ for every $x, \xi\in\bbr_+^d$. Furthermore, $\exp(-x'\xi) h_\xi(x,t)$ is globally bounded in $x$ and $t$ for each $\xi$. As limit we obtain
\begin{align}\label{symbol}
  \lambda(x,\xi)=\ell(x)'\xi - \frac{1}{2} \xi'Q(x) \xi -\int_{\RR^d\backslash\{0\}} \Big(\ee^{-y'\xi}-1 + y'\xi\cdot\chi(y)\Big) \ N(x,\dd y).
\end{align}

\end{theorem}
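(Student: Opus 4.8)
The plan is to identify the Laplace symbol as (minus) the action of the extended generator on the exponential test function $f(y):=\ee^{-(y-x)'\xi}$ and to read off \eqref{symbol} from an It\^o/Dynkin computation. The decisive structural observation, which replaces the bounded-characteristics assumption used for the classical symbol, is that for $\xi\in\bbr_+^d$ and an $\RR_+^d$-valued process one has $0<f(X_t)=\ee^{x'\xi}\ee^{-X_t'\xi}\le \ee^{x'\xi}$, together with the analogous bounds for $\nabla f$ and $\nabla^2 f$ along the paths. Hence the exponential weight $\ee^{-y'\xi}$ damps the merely polynomially growing differential characteristics, so that all the expressions occurring below are controlled even though the coefficients are unbounded.

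First I would fix $x,\xi\in\bbr_+^d$ and apply the change-of-variables formula for semimartingales to $f(X_t)$ under $\bbp^x$, using the canonical decomposition associated with the truncation $y\mapsto y\chi(y)$ and the characteristics \eqref{chars}. Collecting the predictable finite-variation parts and inserting the identities $\partial_j f(y)=-\xi_j f(y)$, $\partial_j\partial_k f(y)=\xi_j\xi_k f(y)$ and $f(y+z)-f(y)=f(y)(\ee^{-z'\xi}-1)$, the drift, the continuous-martingale bracket and the compensated jump term combine into
\begin{align*}
\bbe^x[f(X_t)]-1=\bbe^x\!\int_0^t \cA f(X_s)\,\dd s,\qquad \cA f(y)=-f(y)\,\lambda(y,\xi),
\end{align*}
where $\lambda(y,\xi)$ is exactly the right-hand side of \eqref{symbol}. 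The step that genuinely needs the boundedness hypothesis is the vanishing of the expectation of the local-martingale part $M_t$: its continuous component has angle bracket $\int_0^t f(X_{s-})^2\,\xi'Q(X_{s-})\xi\,\dd s$ and its purely discontinuous component is governed by $\int_0^t\!\int (f(X_{s-}+z)-f(X_{s-}))^2 N(X_{s-},\dd z)\,\dd s$; both integrands are bounded along the paths by virtue of the assumption that $d(\cdot)\ee^{-\cdot}$ is bounded for each differential characteristic $d$, so that $M$ is a genuine martingale and $\bbe^x[M_t]=0$.

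It then remains to divide by $t$ and let $t\downarrow 0$ in
\begin{align*}
h_\xi(x,t)=-\frac1t\,\bbe^x\!\int_0^t f(X_s)\,\lambda(X_s,\xi)\,\dd s .
\end{align*}
Here fine continuity enters through the Proposition following \eqref{rightcont}: since $\ell$, $Q$ and $n$ are finely continuous, the map $s\mapsto f(X_s)\lambda(X_s,\xi)$ is right continuous at $s=0$, $\bbp^x$-a.s., with value $f(x)\lambda(x,\xi)=\lambda(x,\xi)$. The integrand being dominated, uniformly in $s$, by the bound from the previous step, dominated convergence yields $\frac1t\int_0^t f(X_s)\lambda(X_s,\xi)\,\dd s\to\lambda(x,\xi)$ and hence $\lambda(x,\xi)=-\lim_{t\downarrow0}h_\xi(x,t)$, which is \eqref{symbol}. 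The very same domination shows that $h_\xi(\cdot,t)$ stays bounded, giving the asserted global boundedness.

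The main obstacle I anticipate is precisely this integrability/interchange step under the weak growth condition: one has to verify that the exponential damping genuinely upgrades the local martingale to a true martingale and legitimises the passage to the limit, and to keep the cut-off bookkeeping consistent so that the drift term $\ell(x)'\xi$ pairs with the truncation $z'\xi\,\chi(z)$ inside the jump integral. I carry the argument out for $\xi\in\bbr_+^d$, where $\ee^{-y'\xi}$ is a genuine bound; for general $\xi\in\bbr^d$ the identity \eqref{symbol} persists as an analytic identity as long as its right-hand side is finite. Once the boundedness of the weighted characteristics is in hand, the remaining computations are routine.
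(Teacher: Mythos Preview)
Your proposal is correct and follows essentially the same route as the paper's proof: apply It\^o's formula to $\ee^{-(X_t-x)'\xi}$, use the exponential weight $\ee^{-X_{s-}'\xi}$ to damp the polynomially bounded characteristics so that the local-martingale parts become true martingales with zero mean, then invoke fine continuity and dominated convergence to identify the limit and obtain the uniform bound. The paper organises the computation term-by-term (drift, continuous bracket, jumps) rather than via the compact generator identity $\cA f(y)=-f(y)\lambda(y,\xi)$, but the substance is identical.
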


\begin{proof}
We consider the one dimensional situation, since the multidimensional version works alike. Since the proof is very similar to the one of \cite[Lemma 3.4]{behmeschnurr1} we only sketch it here. Let $x, \xi \in\bbr_+$. First use It\^o's formula under the expectation and obtain
\begin{align}
\frac{1}{t} \bbe^x \left( \ee^{-(X_t-x)\xi } -1 \right)
&= \frac{1}{t} \bbe^x \left(\int_{(0,t]} - \xi \ee^{-(X_{s-}-x)\xi} \ \dd X_s \right) \tag{I}\label{termone}\\
&\quad + \frac{1}{t} \bbe^x\left(\frac{1}{2} \int_{(0,t]} \xi^2 \ee^{-(X_{s-}-x)\xi} \ \dd [X,X]_s^c \right) \tag{II} \label{termtwo}\\
&\quad + \frac{1}{t} \bbe^x\left(\ee^{x\xi} \sum_{0<s\leq t} \Big(\ee^{- \xi X_s}-\ee^{-\xi X_{s-}}+\xi \ee^{-\xi X_{s-}} \Delta X_s \Big)\right). \tag{III}\label{termthree}
\end{align}
One has to deal with these terms one-by-one. 

In order to calculate term (I) we use the canonical decomposition of a semimartingale (see \cite[Thm. II.2.34]{jacodshir}).
It is easy to show that the integrals with respect to the martingale parts yield again martingales.
Hence, the expected values of these two parts are zero. The remaining jump part has to be put together with term (III). 

In term \eqref{termtwo} we have $[X,X]_t^c= [X^c, X^c]_t=C_t =\int_{0+}^t Q(X_{s}) \dd s$. Hence
\begin{align} \label{liml}
\frac{1}{2} \int_{(0,t]} \xi^2 \ee^{-(X_{s-}-x)\xi} \ \dd[X,X]_s^c
&=  \frac{1}{2} \xi^2 \int_{(0,t]}  \ee^{-(X_{s-}-x)\xi}  Q(X_{s}) \ \dd s.
\end{align}
Since $Q$ is finely continuous and since $w\mapsto\ee^{-w\xi}  Q(w)$ is bounded we obtain by dominated convergence
\[
\lim_{t\downarrow 0} \frac{1}{2}\xi^2 \frac{1}{t} \bbe^x \int_{(0,t]} \ee^{ -(X_{s}-x)\xi} Q(X_{s}) \ \dd s =\frac{1}{2}\xi^2 Q(x).
\]
The remaining part of term (I) as well as the jump part work alike.
Putting the terms together, we obtain in addition
\begin{align*}
\abs{\frac{\bbe^x \ee^{-(X_t-x)'\xi}-1}{t}}&=
  \left|-\xi\frac{1}{t} \alex{e^{x'\xi}} \bbe^x \int_{(0,t]} \ee^{- X_{s-}\xi} \ell(X_{s})  \ \dd s + \frac{1}{2} \xi^2\frac{1}{t} \bbe^x  \int_{(0,t]}  \ee^{- X_{s-}\xi}  Q(X_{s}) \ \dd s  \right.\\ 
& \quad\quad  + \left.\frac{1}{t} \bbe^x \int_{(0,t]} \ee^{- X_{s-}\xi} \int_{\RR\backslash \{0\}} \Big(\ee^{-y \xi}-1+y \xi \chi(y)\Big) \ N(X_{s},\dd y) \ \dd s \right| \\
&\leq\abs{\xi} e^{x'\xi} \frac{t}{t}  \norm{\ee^{-\xi \cdot}\ell(\cdot)}_\infty+ \xi^2 e^{x'\xi}\frac{t}{2t}  \norm{\ee^{-\xi\cdot} Q(\cdot)}_\infty \\ 
& \quad \quad + C_\xi e^{x'\xi}\frac{t}{t}  \norm{ \ee^{-\xi \cdot} \int_{y\neq 0} (1\wedge \abs{y}^2) \ N(\cdot,\dd y) }_\infty.
\end{align*}
\alex{Thus we have obtained the desired bounds. }
\end{proof}

As a consequence of the above result it is a simple task to derive the semimartingale characteristics of the process from the Laplace symbol, if they have not been known a-priori. 
In many cases It\^o processes are described by a L\'evy driven SDE. If this is the case, one can directly determine the Laplace symbol from the SDE and the characteristic exponent of the driving L\'evy process as shown in the following theorem. We will write $X^x$ for the solution starting in $x$. Recall that it is always possible to enlarge the probability space and define a family of probability measures $(\bbp^x)_{x\in\bbr_+^d}$ on this enlargement in a way that $\bbp(X_t^x\in B) = \bbp^x(X_t\in B)$ (cf. \cite[Section 5.6]{protter}). 

\begin{theorem} \label{thm:laplacesymbolausSDE}
 Let $(L_t)_{t\geq 0}$ be an $\RR^n$-valued L\'evy process with characteristic exponent $\phi_L(\xi):=-\log \EE[\ee^{iL_1'\xi}]$ given by \eqref{lkf} 
and consider the SDE
\begin{equation} \label{eq-sde}
 \dd X^x_t=\Phi(X^x_{t-})\dd L_t,\quad X_0^x=x \in \RR^d_+,
\end{equation}
where $\Phi:\RR_+^d \to \RR_+^{d\times n}$ is locally Lipschitz continuous and at most of linear growth. Then there exists a unique strong solution $(X_t^x)_{t\geq 0}$ of \eqref{eq-sde}. If further $X^x_t\in \RR^d_+$ a.s. for all $t\geq 0$, then this solution has a Laplace symbol $\lambda: \RR_+^d \times \RR_+^d \to \RR$ given by
\begin{align*}
\lefteqn{\lambda(x, \xi) = \phi_L(i \Phi(x)'\xi)}\\
 & =  \ell'  \Phi(x)'\xi - \frac{1}{2} (\Phi(x)'\xi)'Q (\Phi(x)'\xi) 
 -\int_{\RR^n \backslash\{0\}} \left(\ee^{- \Phi(x)'\xi y} -1 + \Phi(x)'\xi y \cdot \mathds{1}_{\{\abs{y}<1\}}(y)\right) N(\dd y).
\end{align*}
\end{theorem}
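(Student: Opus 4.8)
The plan is to split the statement into its two assertions: the well-posedness of the SDE and the formula for the Laplace symbol. For the first part I would invoke the standard existence and uniqueness theory for SDEs driven by semimartingales: since $\Phi$ is locally Lipschitz, a unique strong solution exists up to a possible explosion time, and the linear growth of $\Phi$ rules out explosion in finite time, so a unique global strong solution $(X_t^x)_{t\ge0}$ exists (cf. \cite{protter}). The construction on an enlarged space yields the family $(\bbp^x)$ together with the strong Markov property, so that $X$ is an It\^o-L\'evy process in the sense of Definition \ref{def:ito}, and the standing assumption $X_t^x\in\RR_+^d$ a.s. guarantees that the Laplace symbol is well defined.

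For the formula I would compute $\lambda(x,\xi)$ directly from Definition \ref{def:laplacesymbol}, following the same scheme as in the proof of Theorem \ref{glmkonv} but substituting the dynamics $\dd X_s=\Phi(X_{s-})\,\dd L_s$ throughout. Applying It\^o's formula to $\ee^{-(X_t-x)'\xi}$ produces a first-order term $\int_{(0,t]}(-\xi)'\ee^{-(X_{s-}-x)'\xi}\Phi(X_{s-})\,\dd L_s$, a second-order term governed by $\dd[X,X]^c_s=\Phi(X_{s-})Q\Phi(X_{s-})'\,\dd s$, and a pure-jump sum with increments $\Delta X_s=\Phi(X_{s-})\Delta L_s$. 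I would then insert the canonical decomposition \eqref{candec} of $L$ into the first-order term: the continuous martingale and the compensated small-jump parts are local martingales whose expectations vanish after localization, the finite-variation part contributes the drift $\ell$, and the large-jump part recombines with the It\^o jump sum. The crucial bookkeeping is that this recombination turns the truncation of the large jumps into the indicator $\mathds{1}_{\{|y|<1\}}$ inside the compensator, so that, without any change of variables in the L\'evy measure, the jump contribution is exactly $\int_{\RR^n}(\ee^{-(\Phi(X_{s-})y)'\xi}-1+(\Phi(X_{s-})y)'\xi\,\mathds{1}_{\{|y|<1\}})\,N(\dd y)$ integrated in $\dd s$.

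Taking expectations, dividing by $t$ and letting $t\downarrow 0$, I would use the fine right-continuity of $s\mapsto X_s$ at $0$ together with dominated convergence to replace $X_{s-}$ by $x$ in every integrand, which collapses the three contributions to $-\xi'\Phi(x)\ell$, $\frac12\xi'\Phi(x)Q\Phi(x)'\xi$, and $\int_{\RR^n}(\ee^{-y'\Phi(x)'\xi}-1+y'\Phi(x)'\xi\,\mathds{1}_{\{|y|<1\}})\,N(\dd y)$. Negating as in \eqref{symbollimit} and using $\xi'\Phi(x)\ell=\ell'\Phi(x)'\xi$ and $(\Phi(x)y)'\xi=y'\Phi(x)'\xi$ yields the displayed expression, which one recognizes as $\phi(i\Phi(x)'\xi)$ by plugging $i\Phi(x)'\xi$ into the L\'evy-Khintchine exponent $\phi$ and simplifying the powers of $i$.

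The main obstacle I anticipate is the analytic control needed to justify both the vanishing of the martingale terms and the dominated convergence, because here, unlike in the bounded-characteristics setting, $\Phi$ is only of linear growth, so the differential characteristics $\Phi(x)Q\Phi(x)'$ and $(\Phi(x))_*N$ of $X$ grow polynomially. The point is that for $x,\xi\in\RR_+^d$ the weight $\ee^{-(X_s-x)'\xi}$ together with $X_s\in\RR_+^d$ supplies exponential decay that dominates this polynomial growth, so that the relevant quadratic variations are uniformly bounded in $\omega$ and the functions $h_\xi(\cdot,t)$ are uniformly bounded in $t$; this is precisely the content of the boundedness hypothesis of Theorem \ref{glmkonv}, which is satisfied here because linear growth of $\Phi$ renders the characteristics polynomially bounded. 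An equivalent and slightly shorter route would be to read off the differential characteristics of $X$ directly from the SDE and quote Theorem \ref{glmkonv}, at the cost of performing the change of variables $z=\Phi(x)y$ and reconciling the two cut-off functions afterwards; I would prefer the direct computation above, since the $n$-dimensional truncation then appears automatically.
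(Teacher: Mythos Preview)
Your proposal is correct and follows essentially the same route as the paper: apply It\^o's formula to $\ee^{-(X_t-x)'\xi}$, feed in the L\'evy--It\^o decomposition of $L$ in the first-order integral, use boundedness of $w\mapsto\Phi(w)\ee^{-w'\xi}$ to upgrade the local martingales to true martingales and to justify dominated convergence, and recombine the large-jump term with the It\^o jump sum so that the $\mathds{1}_{\{|y|<1\}}$ truncation in the $L$-variable appears without any change of variables. The only cosmetic differences are that the paper cites \cite[Cond.~IX.6.7]{jacodshir} rather than \cite{protter} for well-posedness, and that it states directly that the martingale terms are bounded (hence true martingales) rather than phrasing this as ``vanish after localization''; your later paragraph on the exponential weight dominating the polynomial growth is exactly the argument the paper uses.
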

\begin{proof}
By \cite[Cond. IX.6.7]{jacodshir} the given SDE has a unique solution. 
Since this solution is assumed to be positive (in all components) the Laplace symbol exists. The calculation of the Laplace symbol is similar to the classical one which can be found in \cite[Thm. 3.1]{sdesymbol} and is hence omitted. It heavily relies on the fact that $\Phi(y) \cdot \exp(-y)$ is bounded. \end{proof}

Sufficient conditions for non-negativity of the solution of a L\'evy driven SDE can e.g. be found in \cite[Thms. V.71 and V.72]{protter}.
In the next special case, non-negativity of the solution of the SDE is easily checked. 

\begin{corollary}
 Let $(L_t)_{t\geq 0}$ be a subordinator with Laplace exponent $\lambda_L(\xi)$, $\xi \geq 0,$ and consider the SDE \eqref{eq-sde}
where $\Phi:\RR_+^d \to \RR_+^{d\times 1}$ is locally Lipschitz continuous and at most of linear growth. 
Then there exists a unique strong solution $(X_t^x)_{t\geq 0}$, $X_t^x\in \RR^d_+$, of \eqref{eq-sde} and this solution has a Laplace symbol $\lambda: \RR_+^d \times \RR_+^d \to \RR$ given by
$\lambda(x, \xi) = \lambda_L(\Phi(x)\xi)$.
\end{corollary}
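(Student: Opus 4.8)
The plan is to obtain the corollary as a direct specialization of Theorem \ref{thm:laplacesymbolausSDE}, so that the only genuinely new work is to verify the two remaining hypotheses of that theorem in the present setting and then to rewrite the resulting formula. A subordinator is an $\RR$-valued (hence $n=1$) L\'evy process with vanishing Gaussian part $Q=0$, non-negative drift, and L\'evy measure $N$ concentrated on $(0,\infty)$ satisfying $\int_{(0,\infty)}(1\wedge y)\,N(\dd y)<\infty$. Since $\Phi$ is assumed $\RR_+^{d\times 1}$-valued, locally Lipschitz and of linear growth, Theorem \ref{thm:laplacesymbolausSDE} already furnishes the unique strong solution $(X_t^x)_{t\geq 0}$ and, once non-negativity is secured, the formula $\lambda(x,\xi)=\phi(i\Phi(x)'\xi)$. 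Thus two points remain: (i) to check that $X_t^x\in\RR_+^d$ a.s., which in the general situation of Theorem \ref{thm:laplacesymbolausSDE} had to be \emph{assumed}, and (ii) to identify $\phi(i\,\cdot\,)$ with the Laplace exponent $\lambda_L$.

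For (i) I would exploit the fact that, in contrast to a general L\'evy driver, a subordinator has non-decreasing paths. Writing the $i$-th component of the SDE as $\dd X^{x,(i)}_t=\Phi_i(X^x_{t-})\,\dd L_t$ with $\Phi_i\geq 0$ and $L$ non-decreasing, the stochastic integral reduces to a pathwise (Lebesgue--Stieltjes) integral of a non-negative integrand against a non-negative measure, so each component $t\mapsto X^{x,(i)}_t$ is non-decreasing and therefore stays above its starting value $x^{(i)}\geq 0$. Hence $X_t^x\in\RR_+^d$ for all $t\geq 0$ almost surely, which is precisely the point that the preceding remark announced to be ``easily checked'' in this special case.

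For (ii) I would use that, because $L_1\geq 0$, its characteristic function $\eta\mapsto\bbe^0[\ee^{i\eta L_1}]=\ee^{-\phi(\eta)}$ extends analytically to the closed upper half-plane, and its restriction to the positive imaginary axis is the Laplace transform $\xi\mapsto\bbe^0[\ee^{-\xi L_1}]=\ee^{-\lambda_L(\xi)}$; this yields $\phi(iu)=\lambda_L(u)$ for every $u\geq 0$, an instance of the formal identity ``$\lambda(x,\xi)=p(x,i\xi)$'' discussed in the earlier remark. Concretely, substituting $\eta=iu$ into the characteristic exponent with $Q=0$ and $N$ supported on $(0,\infty)$ turns the oscillatory integrand into $\ee^{-yu}-1+yu\,\mathds{1}_{\{|y|<1\}}$, and the summability $\int_{(0,1)}y\,N(\dd y)<\infty$ lets one absorb the cut-off term into the drift, producing exactly $\lambda_L(u)=du-\int_{(0,\infty)}(\ee^{-uy}-1)\,N(\dd y)$ with $d=\ell-\int_{(0,1)}y\,N(\dd y)\geq 0$. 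The only place demanding a little care is precisely this reconciliation of the two drift conventions --- the truncated drift $\ell$ appearing in $\phi$ versus the genuine drift $d$ of the subordinator --- but it is routine given the integrability of $N$. Combining (i) and (ii) with $\lambda(x,\xi)=\phi(i\Phi(x)'\xi)$ then gives $\lambda(x,\xi)=\lambda_L(\Phi(x)'\xi)=\lambda_L(\Phi(x)\xi)$, the scalar $\Phi(x)'\xi=\sum_{j}\Phi_j(x)\xi_j\geq 0$ lying in the domain $[0,\infty)$ of $\lambda_L$, as claimed.
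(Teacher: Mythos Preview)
Your proposal is correct and is exactly the approach the paper intends: the paper gives no separate proof of the corollary but simply states that ``non-negativity of the solution of the SDE is easily checked'' and then invokes Theorem~\ref{thm:laplacesymbolausSDE}. You have spelled out both of the tacit steps --- the pathwise monotonicity argument for (i) and the identification $\phi(iu)=\lambda_L(u)$ for (ii) --- in more detail than the paper itself provides, but the route is the same.
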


Note that this corollary as well as the theorem above (in the case governed by \cite[Thm. V.71]{protter}) yield examples of It\^o processes having differential characteristics which are  \emph{polynomially bounded} but not bounded.

We end this section by establishing the connection of the Laplace symbol and the generator of an a.s. non-negative It\^o process. To do so, we have to define the following subclass of $C^\infty_0((0,\infty)^d)$, the continuous, infinitely often differentiable functions on $(0,\infty)^d$, vanishing (componentwise) at infinity.
\begin{align} \label{eq:defS}
 \cS&:= \left\{ f\in C^\infty_0((0,\infty)^d): \int_{\RR_+} \left|(-1)^k \frac{\partial^k f}{\partial x_i^k} (x)|_{x_i= k/t} (k/t)^{k+1} \right| \dd t<\infty, \right.\\ 
& \qquad \text{ for all } k=1,2,\ldots, i=1,\ldots, d \text{ and all }  x_\ell \in \RR_+, \text{ where } \ell \neq i=1,\ldots,d,\quad \text{and} \nonumber \\ 
& \; \qquad   \lim_{\substack{j\to \infty \\ k\to \infty}} \int_{\RR_+} \left|(-1)^k \frac{\partial^k f}{\partial x_i^k} (x)|_{x_i= k/t} (k/t)^{k+1}- (-1)^j \frac{\partial^j f}{\partial x_i^j} (x)|_{x_i= j/t} (j/t)^{j+1} \right| \dd t=0 \nonumber \\ 
& \qquad  \text{ for all } i=1,\ldots, d \text{ and all }  x_\ell \in \RR_+, \text{ where } \ell \neq i=1,\ldots,d \Big\}, \nonumber 
\end{align}
where $x_i$ denotes the $i$'th component of the vector $x$.

\begin{theorem}\label{itogenerator}
 Let $(X_t)_{t\geq 0}$ be an $\RR_+^d$-valued It\^o process with polynomially bounded, finely continuous differential characteristics and with generator $(\cA, D(\cA))$. Then for all $f\in D(\cA)\cap \cS$
\begin{equation} \label{generatorsymbol}
 \cA f(x)= -\int_{\RR^d_+} \ee^{-x'\xi} \lambda(x, \xi) \check{f}(\xi) \dd \xi, \quad x\in \RR^d_+,
\end{equation}
where $\check{f}$ denotes the inverse Laplace transform of $f$.
\end{theorem}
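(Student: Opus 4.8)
The plan is to compute the generator directly from its definition as a pointwise limit of difference quotients, insert the inverse Laplace representation of $f$, and pass to the limit under the integral sign by means of Theorem~\ref{glmkonv}. Since $f\in D(\cA)$, the pointwise identity $\cA f(x)=\lim_{t\downarrow 0} t^{-1}\big(\bbe^x f(X_t)-f(x)\big)$ holds. Since $f\in\cS$, its inverse Laplace transform $\check f$ is well defined and $f(y)=\int_{\RR^d_+}\ee^{-y'\xi}\check f(\xi)\,\dd\xi$; the integrability and Cauchy conditions built into the definition \eqref{eq:defS} of $\cS$ are exactly the Post--Widder inversion requirements guaranteeing that $\check f$ exists and is integrable against the weights that appear below.

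First I would interchange expectation and the $\xi$-integral, which Fubini licenses because $\ee^{-X_t'\xi}\le 1$ for $X_t,\xi\ge 0$ and $\check f$ is integrable, to obtain $\bbe^x f(X_t)=\int_{\RR^d_+}\bbe^x[\ee^{-X_t'\xi}]\check f(\xi)\,\dd\xi$. Using $f(x)=\int_{\RR^d_+}\ee^{-x'\xi}\check f(\xi)\,\dd\xi$ together with the elementary identity $\bbe^x[\ee^{-X_t'\xi}]-\ee^{-x'\xi}=\ee^{-x'\xi}\big(\bbe^x[\ee^{-(X_t-x)'\xi}]-1\big)=\ee^{-x'\xi}\,t\,h_\xi(x,t)$ and the definition of $h_\xi$ from Definition~\ref{def:laplacesymbol}, the difference quotient collapses to
\begin{align*}
\frac{\bbe^x f(X_t)-f(x)}{t}=\int_{\RR^d_+}\ee^{-x'\xi}\,h_\xi(x,t)\,\check f(\xi)\,\dd\xi .
\end{align*}

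Next I would let $t\downarrow 0$ inside the integral. By Theorem~\ref{glmkonv} one has $h_\xi(x,t)\to-\lambda(x,\xi)$ for every fixed $\xi$, so the integrand converges pointwise to $-\ee^{-x'\xi}\lambda(x,\xi)\check f(\xi)$ and, once dominated convergence is justified, the limit is precisely the claimed formula \eqref{generatorsymbol}. As a consistency check, applying the integro-differential form of $\cA$ to the exponential $g_\xi(y):=\ee^{-y'\xi}$ and using the explicit shape \eqref{symbol} of the Laplace symbol gives $\cA g_\xi(x)=-\ee^{-x'\xi}\lambda(x,\xi)$; integrating this identity against $\check f(\xi)$ reproduces the same right-hand side.

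The main obstacle is the domination needed to pass the limit through the integral (and, to a lesser extent, the Fubini step, which rests on the same integrability of $\check f$). The bound on $h_\xi$ supplied by Theorem~\ref{glmkonv} is uniform in $x$ and $t$ but not in $\xi$: the estimates in its proof involve $\norm{\ee^{-\xi\,\cdot}\ell(\cdot)}_\infty$, $\norm{\ee^{-\xi\,\cdot}Q(\cdot)}_\infty$ and $\norm{\ee^{-\xi\,\cdot}\int_{y\neq 0}(1\wedge\abs{y}^2)N(\cdot,\dd y)}_\infty$, which grow polynomially in $\xi$ as $\xi\to\infty$ and blow up as $\xi\downarrow 0$. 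I must therefore exhibit a single $\xi$-integrable majorant for $\ee^{-x'\xi}\abs{h_\xi(x,t)}\,\abs{\check f(\xi)}$, valid for all small $t$; this is exactly where the full strength of the conditions defining $\cS$ is used, as they force $\check f$ to decay fast enough at infinity and to behave suitably near the origin to absorb these $\xi$-weights. Granting this majorant, dominated convergence completes the argument.
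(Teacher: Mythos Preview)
Your proof is essentially the paper's proof run in the opposite direction: the paper starts from the right-hand side $\int_{\RR^d_+}\ee^{-x'\xi}\lambda(x,\xi)\check f(\xi)\,\dd\xi$, inserts the definition of $\lambda$ as a limit, exchanges limit and integral by dominated convergence ``justified by Theorem~\ref{glmkonv}'', and then uses Fubini to recognise the result as $-\cA f(x)$; you start from $\cA f(x)$, insert the Laplace representation of $f$, Fubini first, then dominated convergence via Theorem~\ref{glmkonv}. The ingredients and their order of use are identical.

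One remark: the concern you raise in your final paragraph---that the bound from Theorem~\ref{glmkonv} is uniform in $(x,t)$ but not in $\xi$, so that an integrable $\xi$-majorant is not immediate from $\check f\in L^1$ alone---is legitimate, and the paper's proof does not address it any more explicitly than you do. The paper simply cites Theorem~\ref{glmkonv} for the dominated convergence step, so at the level of detail the paper provides, your argument is equally complete.
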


\begin{proof}
First, observe that by a straightforward multivariate extension of \cite[Thm. VII.17a]{widder}, every $f\in\cS$ admits an integrable inverse Laplace transform $\check{f}$. Hence, starting with the definition of the Laplace symbol we obtain
\begin{align}
 \int_{\RR^d_+} \ee^{-x'\xi} \lambda(x, \xi) \check{f}(\xi) \dd \xi 
&= - \int_{\RR^d_+} \ee^{-x'\xi} \lim_{t\to 0} \frac{1}{t} \left(\EE^x[\ee^{-(X_t-x)'\xi}]-1 \right) \check{f}(\xi) \dd \xi \nonumber \\
&=  - \int_{\RR^d_+} \lim_{t\to 0} \frac{1}{t} \left(\EE^x[\ee^{-X_t'\xi}]-\ee^{-x'\xi}  \right) \check{f}(\xi) \dd \xi \nonumber  \\
&=  - \lim_{t\to 0} \frac{1}{t} \int_{\RR^d_+} \left(\EE^x[\ee^{-X_t'\xi}]-\ee^{-x'\xi}  \right) \check{f}(\xi) \dd \xi, \label{eq-generator1}
\end{align}
where in the last step we used Lebesgue's dominated convergence theorem, which is justified by Theorem \ref{glmkonv}. Further by an application of Fubini's Theorem
\begin{align*}
 \int_{\RR^d_+} \left(\EE^x[\ee^{-X_t'\xi}]-\ee^{-x'\xi}  \right) \check{f}(\xi) \dd \xi 
&= \int_{\RR^d_+}\left( \int_{\RR^d_+} \ee^{-z'\xi} \dd P_{X_t^x}(z) -  \ee^{-x'\xi}  \right) \check{f}(\xi) \dd \xi \\
&= \int_{\RR^d_+} f(z) \dd P_{X_t^x}(z) -  f(x) \\
&= \EE^x[f(X_t)] - f(x),
\end{align*}
 which, together with \eqref{eq-generator1}, yields the result.
\end{proof}


\section{Invariant Distributions}\label{S3}
\setcounter{equation}{0}


In \cite{behmeschnurr1} it was shown under certain boundedness conditions that a probability measure $\mu$ is invariant for a given It\^o process if and only if 
\begin{equation} \label{symbolstat}
 \int_{\RR^d} \ee^{ix'\xi} p(x,\xi) \mu(\dd x)=0 \quad \mbox{for all }\; \xi\in \RR^d,
\end{equation}
where $p(x,\xi)$ is the (probabilistic) symbol of the It\^o process. 
The main advantage of our new criterion is the fact, that we can relax the formerly needed boundedness conditions, namely from globally bounded by a constant to polynomially bounded.   

\begin{theorem} \label{thm:nec}
	Let $(X_t)_{t\geq 0}$ be an $\RR_+^d$-valued It\^o process with 
	polynomially bounded, finely continuous differential characteristics whose Laplace symbol is given by $\lambda(x,\xi)$, $x,\xi\in\RR_+^d$.
	\begin{enumerate}
		\item Assume that $(X_t)_{t\geq 0}$ admits an invariant distribution $\mu$, then
		\begin{equation} \label{onesidedsymbolstat}
		\int_{\RR_+^d} \ee^{-x'\xi} \lambda(x,\xi) \mu(\dd x)=0 \quad \mbox{for all }\; \xi\in \RR_+^d.
		\end{equation}
		\item Let $(\cA, D(\cA))$ be the generator of $(X_t)_{t\geq 0}$ such that the set of functions $f\in D(\cA)\cap \cS$ with $\cS$ as in \eqref{eq:defS} contains a core.
		Further assume there exists a probability measure $\mu$ on $\RR^d_+$ such that $\int_{\RR^d_+} \ee^{-x'\xi} | \lambda(x,\xi)| \mu(\dd x)<\infty$ and such that \eqref{onesidedsymbolstat} holds for all $\xi\in \RR^d_+$. Then $\mu$ is invariant for $(X_t)_{t\geq 0}$.
	\end{enumerate}
\end{theorem}

\begin{proof} To prove necessity we obtain by Lebesgue's dominated convergence theorem using Theorem \ref{glmkonv}
\begin{align*}
\int_{\RR_+^d} \ee^{-x' \xi} \lambda(x,\xi) \mu(\dd x)
&= \int_{\RR_+^d} \ee^{-x' \xi} \lim_{t\to 0} \EE^x \left[\frac{\ee^{-(X_t-x)'\xi}-1}{t}\right] \mu(\dd x)\\
&= \lim_{t\to 0}\frac{1}{t} \int_{\RR_+^d}\ee^{-x'\xi} \, \EE^x [\ee^{-(X_t-x)'\xi}-1] \mu(\dd x)
=0,
\end{align*}
as had to be shown.  For the sufficiency observe that by Theorem \ref{itogenerator} the generator $\cA$ admits the representation \eqref{generatorsymbol} with integrable Laplace inverse $\check{f}$ for all $f\in D(\cA)\cap \cS$. Therefore, for these functions $f$ we obtain using Fubini's theorem
\begin{align*}
\int_{\RR^d_+} \cA f(x)\mu(\dd x)
&= \int_{\RR^d_+} \int_{\RR_+} \ee^{-x'\xi} \lambda(x, \xi) \check{f}(\xi) \dd \xi f(x)\mu(\dd x)\\
&= \int_{\RR^d_+} \int_{\RR_+} \ee^{-x'\xi} \lambda(x, \xi) \mu(\dd x) \check{f}(\xi) \dd \xi = 0.
\end{align*}
Hence $\int_{\RR^d_+} \cA f(x)\mu(\dd x)=0,$  for all $f\in D(\cA)$
from which the assertion follows by standard arguments (e.g. proof of \cite[Prop. 9.2, b)$\Leftrightarrow$c)]{ethierkurtz}).
\end{proof}

\begin{remark}\label{reminfinv}\rm 
\alex{The condition that $D(\cA)\cap \cS$ contains a core is certainly hard to check as this is usually the case when stating conditions on the cores of a generator. Nevertheless this condition can not be weakened as $f\in \cS$ is necessary and sufficient for an integrable inverse Laplace transform of $f$ to exist (see \cite[Thm. VII.17a]{widder}).}
\end{remark}


\begin{examples} \label{ex:CBI} \begin{enumerate}
		\item  The Cox-Ingersoll-Ross (CIR) process is defined as solution of the SDE
		$$\dd X^x_t= a(b-X_t^x)\dd t + \sigma \sqrt{X_t^x} \dd B_t,\quad t\geq 0, \quad X_0^x=x>0,$$
		for some constants $a,b,\sigma>0$ and a real-valued standard Brownian motion $(B_t)_{t\geq 0}$.
		This process is a.s. non-negative and using Theorem \ref{thm:laplacesymbolausSDE} we derive its Laplace symbol
		$$\lambda(x,\xi)=a(b-x)\xi - \frac{1}{2}\sigma^2 x \xi^2 = ab\xi - (a\xi + \frac{1}{2}\sigma^2 \xi^2)x, \quad x,\xi\geq 0.$$
		Hence by Theorem \ref{thm:nec} any stationary distribution $\mu$ of the CIR process has to fulfil
		\begin{align*}
		0&= \int_{\RR_+^d} \ee^{-x'\xi} \lambda(x,\xi) \mu(\dd x)
		= ab\xi \psi_\mu (\xi) + (a\xi + \frac{1}{2}\sigma^2 \xi^2)  \psi'_\mu (\xi).
		\end{align*}
		Since $\lim_{\xi \to 0}\psi(\xi)=1$ this first order linear ODE is uniquely solved by
		$$\psi(\xi)= \left(\frac{2a}{2 a + \xi \sigma^2}\right)^{\frac{2 a b}{\sigma^2}},$$
		which is the Laplace transform of a Gamma distribution, the well-known unique stationary distribution of the CIR process.
		Note that the CIR process was not governed by the previous result \cite[Prop. 3.12]{behmeschnurr1}.
		\item 
		Consider a continuous-state branching process with immigration (CBI), i.e., a Markov process on $\RR_+$ whose Laplace symbol is given by  $\lambda (x,\xi)=F(\xi) + x G(\xi),$
		where $F$ and $G$ are of L\'evy-Khintchine form, that is,
		\begin{align*}
		F(\xi)&= a_F \xi + \int_{\RR_+} (1-\ee^{-u\xi} ) \nu_F(\dd u),  \quad \text{and}\\
		G(\xi)&= a_G \xi - \sigma_G^2 \xi^2 + \int_{\RR_+} (1-\ee^{-u\xi} - u\xi \mathds{1}_{(0,1]}(u)) \nu_G(\dd u),
		\end{align*}
		where $a_F, \sigma_G^2 \geq 0$, $a_G\in \RR$ and $\nu_F, \nu_G$ are L\'evy measures (see e.g. \cite{kellerresselmija} for details). \\
		Then for any probability measure $\mu$ with Laplace transform $\psi_\mu$ we have
		$$\int_{\RR_+} \ee^{-x\xi} \lambda (x,\xi) \mu(\dd x) = F(\xi) \psi_\mu(\xi) + G(\xi)  \psi_\mu'(\xi), \quad \xi\geq 0,$$
		and this equals $0$ if and only if 
		$$\psi_\mu(\xi)=\exp\left(\int_{(0,\xi)} \frac{F(u)}{G(u)} \dd u\right),$$
		which is the form of the Laplace transform of the invariant measure of a CBI process as shown in \cite[Thm. 2.6]{kellerresselmija}.
	\end{enumerate}
\end{examples}

\begin{remark} 
Observe that the Laplace symbol can also be used in order to analyze the invariant distributions of processes defined on the whole space $\bbr^d$ since stationarity of a Markov process is kept when applying a measurable function on the values of the process. For example in the case of a symmetric process $(X_t)_{t\geq 0}$ one can define a non-negative process $(Y_t)_{t\geq 0}$ via $Y_t:=X_t^2$ and apply Theorem \ref{thm:nec} to obtain its invariant distribution. This then also allows to determine the invariant distribution of the original process  $(X_t)_{t\geq 0}$.
\end{remark}

\section{Acknowledgements}

Our thanks go to Martin Keller-Ressel for a helpful hint leading to Example \ref{ex:CBI}(ii). Alexander Schnurr gratefully acknowledges financial support by the German Science Foundation (DFG) for the project SCHN1231/2-1. 
Further we would like to thank two anonymous referees for their comments on an earlier version of this paper which helped us to improve it.

\vspace*{-7mm}

\end{document}